\theoremstyle{plain}
\newtheorem{fact}{Fact}
\theoremstyle{definition}
\newtheorem{eg}{Example}
\theoremstyle{remark}
\newtheorem*{rmk}{Remark}
\begin{document}

\title{A comment on the paper ``Continued fractions and orderings on the Markov numbers, Adv. Math. 370 (2020), 107231"}

\author{DoYong\ Kwon \medskip \\
\emph{Department of Mathematics},
\emph{Chonnam National University},\\
\emph{Gwangju 61186, Republic of Korea} \smallskip \\
\emph{E-mail:} \textsf{doyong@jnu.ac.kr}
}
\date{}
\maketitle

\begin{abstract}
We discuss the validity of the proof of the fixed numerator conjecture on Markov numbers, which is the main result of the paper mentioned in the title.
\end{abstract}

\indent 2010 \textit{Mathematics Subject Classification:} 11A55, 11B83, 30B70.\\
\indent \textit{Keywords:} Markov number, fixed numerator conjecture \newline

%
%

\section{Introduction}

In the present paper, all notations and conventions follow those of \cite{RS} unless stated explicitly.
For a finite word $w$, we mean by $|w|$ the length of $w$, i.e., the number of letters appearing in $w$. For a letter $a$, let $|w|_a$ denote the number of occurrences of $a$ in $w$.
For example, $|22211|=5$ and $|22211|_2=3$.

In \cite{RS}, the authors gave a proof of the fixed numerator conjecture on Markov numbers, which was posed by Aigner \cite{Ai} together with the fixed denominator conjecture and the fixed sum conjecture.
\bigskip

\noindent {\bf Conjecture 1.2} {\it (Fixed numerator conjecture)
Let $p$, $q$ and $i$ be positive integers such that $p < q$, $\gcd(q, p) = 1$ and
$\gcd(q + i, p) = 1$. Then $m_{p/q} < m_{p/(q+i)}$.}
\bigskip

\noindent The proof of this conjecture was presented via Theorem 5.2.
\bigskip

\noindent {\bf Theorem 5.2} {\it
Let $p$ and $q$ be positive integers such that $p < q$. Then $m_{p/q} < m_{p/(q+1)}$.}
\bigskip

\noindent In the above theorem, $m_{p/q}$ is the numerator of the continued fraction $c_{p/q}$ whose partial quotients are encoded by the Markov snake graph $\mathcal{G}_{p/q}$. In particular, if $\gcd(q, p) = 1$, then it is the usual Markov number indexed by a reduced rational $p/q$.

We will see below that the proof of Theorem 5.2 is incomplete and has a logical error, and hence needs some modifications.

Note that the fixed numerator conjecture was reproved independently in a different manner.
In \cite{LPTV}, the authors gave proofs for the three Aigner's conjectures: the fixed numerator, denominator and sum conjectures. The authors of \cite{RS} also announced different proofs for the three Aigner's conjectures. See \cite{LLRS}.

\section{Discussion on the proof of Theorem 5.2}

We first note basic facts on the Markov snake graphs.
\begin{fact} \label{Basic}
Let $p$ and $q$ be positive integers with $p<q$, and let $c_{p/q}=[a_1,a_2,\ldots,a_n]$, where the word $a_1 a_2 \cdots a_n$ is encoded by the Markov snake graph $\mathcal{G}_{p/q}$.
Then $m_{p/q}=N[a_1,a_2,\ldots,a_n]$ and the following hold.
\renewcommand{\theenumi}{\alph{enumi}}
\renewcommand{\labelenumi}{{\rm(\theenumi)}}
\begin{enumerate}
 \item $a_1=a_n=2$,
 \item $|a_1 a_2 \cdots a_n|_1 =2q-2p-2$,
 \item $|a_1 a_2 \cdots a_n|_2 =2p$,
 \item $|a_1 a_2 \cdots a_n| =2q-2$, i.e., $n=2q-2$,
 \item the number of replaceable entries ($11$ or $2$) in $a_1 a_2 \cdots a_n$ is $q+p-1$.
\end{enumerate}
\end{fact}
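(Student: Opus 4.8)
\medskip

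\noindent\textbf{Proof proposal.} The plan is to read everything off the construction of the Markov snake graph given in \cite{RS}. Recall from there that $\mathcal{G}_{p/q}$ is produced from the combinatorial data of $p/q$ (its Christoffel path, equivalently its position in the Stern--Brocot tree), that the word $a_1a_2\cdots a_n$ is obtained by reading the sign sequence on the interior edges of $\mathcal{G}_{p/q}$, and that $m_{p/q}=N[a_1,\dots,a_n]$ because $N[a_1,\dots,a_n]$ counts the perfect matchings of $\mathcal{G}_{p/q}$. The first thing I would record is the dictionary between the two descriptions: a maximal run of $\ell$ equal signs produces one partial quotient equal to $\ell$, except for the two runs at the ends of the sign sequence, which get a $+1$ correction; and for the Markov snake graph every partial quotient is $1$ or $2$. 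With this in hand the five assertions split into an easy one, a bookkeeping block, and one genuinely delicate point.

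Part (a) is the easy one. The Markov snake graph is self-symmetric (invariant under the half-turn that swaps its two ends), so its sign sequence is a palindrome whose first and last runs have length one; the end correction turns each of them into $a_1=a_n=2$. Equivalently, (a) follows from the Stern--Brocot recursion for $c_{p/q}$, whose base cases (such as $c_{1/2}=[2,2]$) have outer entries $2$ and whose branching rule preserves this. The hypothesis $p<q$ enters only to guarantee $n=2q-2\ge 2$, so that ``first'' and ``last'' refer to distinct entries.

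Parts (b), (c), (d) form the bookkeeping block, and I would handle them at once. Since every $a_i\in\{1,2\}$, one has $n=|a_1\cdots a_n|=|a_1\cdots a_n|_1+|a_1\cdots a_n|_2$, so (d) follows from (b) and (c); and by the dictionary $|a_1\cdots a_n|_2$ is the number of sign-runs of length $2$ while $|a_1\cdots a_n|_1$ is the number of sign-runs of length $1$. The content is therefore to count these runs in terms of $p$ and $q$: the procedure of \cite{RS} that builds $\mathcal{G}_{p/q}$ from the Christoffel path of slope $p/q$ (with $q$ horizontal and $p$ vertical steps) produces exactly $2p$ runs of length $2$ and $2(q-p-1)$ runs of length $1$. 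This is (c) and (b), and then (d) is the sum $n=2p+(2q-2p-2)=2q-2$.

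The delicate point is (e). Here I would first establish the auxiliary claim that in $a_1a_2\cdots a_n$ every maximal block consisting of the letter $1$ has even length --- this is forced by the construction, because the $1$'s enter in pairs (from the vertical steps of the Christoffel path), and it is also preserved by the Stern--Brocot recursion. Granting this, a maximal $1$-block of length $2k$ contains exactly $k$ non-overlapping factors $11$, so the replaceable factors $11$ account for $\tfrac12|a_1\cdots a_n|_1=q-p-1$ positions in all, while each of the $|a_1\cdots a_n|_2=2p$ occurrences of the letter $2$ is a replaceable entry by itself; hence the number of replaceable entries is $(q-p-1)+2p=q+p-1$. The main obstacle is precisely this last item: one must fix the exact meaning of ``replaceable entry'' as used in \cite{RS} and prove the even-length property of the $1$-blocks. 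Once those are settled, (b)--(e) are routine counting and (a) is immediate.
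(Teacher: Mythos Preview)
The paper does not actually prove this statement: Fact~1 is presented without proof as a list of ``basic facts on the Markov snake graphs,'' implicitly referring the reader to the construction in \cite{RS}. So there is no argument in the paper to compare yours against; your proposal already goes further than the paper does by sketching how each item can be read off that construction.

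As a sketch your outline is sound, and the arithmetic in (b)--(e) is correct once the combinatorial inputs are granted. The places you flag as delicate are indeed the only real content: you assert, rather than derive, that the sign-sequence/partial-quotient dictionary has exactly the ``run of length $\ell$ gives $a_i=\ell$, with a $+1$ correction at the two ends'' form, that the Christoffel construction yields precisely $2p$ long runs and $2(q-p-1)$ short runs, and that every maximal $1$-block has even length. A self-contained proof would have to open up the definitions in \cite{RS} and check these three claims; once they are in place, (a) is immediate and (b)--(e) are the straightforward counts you give.
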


Let $c_{p/(q+1)}=[a_1,a_2,\ldots,a_{2q}]$ and $c_{p/q}=[b_1,b_2,\ldots,b_{2q-2}]$.
The authors looked at replaceable entries in the words $A:=a_1 a_2\cdots a_{2q}$ and
$B:=b_1 b_2 \cdots b_{2q-2}$. In other words, they considered $11$ as a single entry, while each $2$ separately. From the left to the right of $A$ and $B$, different replaceable entries were called \textit{replacements}.
Then, in the first replacement, $A$ contains $11$, and $B$ contains $2$ as subwords.
Moreover, the replacements occur alternately, i.e., $11$ with $2$, $2$ with $11$, $11$ with $2$, and so on. With the help of Lemma 5.1, we have
$$A=\mu11\delta2\nu2,\ \ \mathrm{and}\ \ B=\mu2\delta11\nu',$$
where $\mu, \nu, \nu' \in \{1,2\}^*$, $\delta\in \{2\}^*$, and $|\mu|$ is odd.
See \cite[p.16]{RS}.

The proof of Theorem 5.2 is divided into two cases according to whether the number of replacements is even or odd. And the proof for the even case proceeds by induction.
Let
\begin{equation} \label{EvenSetting}
m_{p/(q+1)}=N[\mu,1,1,\delta,2,\nu,2]\ \ \mathrm{and}\ \ m_{p/q}=N[\mu,2,\delta,1,1,\nu'],
\end{equation}
and suppose that there are an even number of replacements. The induction begins with the case of $\nu=\nu'$, that is, $\nu$ and $\nu'$ contains no more replacements.
In other words, the authors first showed that
$$N[\mu,1,1,\delta,2,\nu,2]-N[\mu,2,\delta,1,1,\nu]>0.$$
But this case is vacuous, and hence cannot be a base case of induction.

\begin{fact}
The case of $\nu=\nu'$ never occurs.
\end{fact}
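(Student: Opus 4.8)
The plan is to obtain a contradiction from a one–line length count, using nothing beyond the two factorizations $A=\mu11\delta2\nu2$, $B=\mu2\delta11\nu'$ and part~(d) of Fact~\ref{Basic}.

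First I would write down the total lengths. Applying Fact~\ref{Basic}(d) to the rational $p/(q+1)$ gives $|A|=|a_1a_2\cdots a_{2q}|=2(q+1)-2=2q$, and applying it to $p/q$ gives $|B|=|b_1b_2\cdots b_{2q-2}|=2q-2$; thus $|A|-|B|=2$. Then I would read the lengths directly off the factorizations, using the key point that $\mu$ and $\delta$ denote the \emph{same} factors in both words (namely the common block preceding the first replacement, and the common block lying between the first two replacements): one gets $|A|=|\mu|+|\delta|+|\nu|+4$ and $|B|=|\mu|+|\delta|+|\nu'|+3$.

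Subtracting these two expressions yields $2=|A|-|B|=(|\nu|-|\nu'|)+1$, hence $|\nu|=|\nu'|+1$. In particular $|\nu|\neq|\nu'|$, so $\nu$ and $\nu'$ are different words, and the situation $\nu=\nu'$ never arises. (Alternatively, counting occurrences of the letter $1$ via Fact~\ref{Basic}(b) gives $|\nu|_1=|\nu'|_1+2$, which is again incompatible with $\nu=\nu'$.)

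There is essentially no hard step here; the only thing to be careful about is that the subtraction of the two length identities is legitimate precisely because $\mu$ and $\delta$ are common to the two factorizations — if one instead allowed the corresponding blocks in $A$ and $B$ to differ, the count would say nothing. The consequence worth recording is that the genuine base case of the even-case induction in \cite{RS} should be the configuration in which $\nu$ and $\nu'$ contain no further replacements, and that even in this configuration one still has $\nu\neq\nu'$ — indeed $|\nu|=|\nu'|+1$ — so the inequality actually in need of a base-case proof is $N[\mu,1,1,\delta,2,\nu,2]-N[\mu,2,\delta,1,1,\nu']>0$ with $\nu\neq\nu'$, not the vacuous $\nu=\nu'$ version.
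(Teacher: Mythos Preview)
Your proof is correct and essentially identical to the paper's: both compute $|A|=2q$, $|B|=2q-2$ from Fact~\ref{Basic}(d), observe that the common prefixes $\mu11\delta2$ and $\mu2\delta11$ have equal length, and conclude $|\nu|=|\nu'|+1$. The only difference is that you spell out the arithmetic and add the (redundant but valid) alternative count via Fact~\ref{Basic}(b).
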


\begin{proof}
Fact \ref{Basic} guarantees that $|\mu11\delta2\nu2|=2q$ and $|\mu2\delta11\nu'|=2q-2$.
Since $|\mu11\delta2|=|\mu2\delta11|$, we have $|\nu|=|\nu'|+1$.
\end{proof}

In fact, the even case never occurs as follows.

\begin{fact}
Between $A=\mu11\delta2\nu2$ and $B=\mu2\delta11\nu'$, the number of replacements is odd.
\end{fact}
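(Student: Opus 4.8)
The plan is to compare the two words $A=\mu11\delta2\nu2$ and $B=\mu2\delta11\nu'$ letter by letter and count replaceable entries in the common prefix $\mu$ and in the respective tails, exploiting the parity information we already have. Recall from Fact~\ref{Basic} that $|A|=2q$ and $|B|=2q-2$, that the prefix $\mu$ is shared, and that $|\mu|$ is odd; we also recorded in the previous Fact that $|\nu|=|\nu'|+1$. The number of replacements between $A$ and $B$ is, by definition, the number of replaceable entries counted from the left until the two words agree from there on; since replacements alternate starting with $11$ in $A$ against $2$ in $B$, this number is odd precisely when the last replacement is an $11$ in $A$ matched against a $2$ in $B$, and even precisely when the last replacement is a $2$ in $A$ against an $11$ in $B$.

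First I would dispose of the contribution of $\mu$: since $\mu$ is a common prefix, its replaceable entries contribute equally to $A$ and to $B$ and do not affect the count of replacements; what matters is the structure after $\mu$. After $\mu$, the word $A$ continues with $11\delta2\nu2$ and $B$ continues with $2\delta11\nu'$. The block $11\delta2$ in $A$ has the same length as the block $2\delta11$ in $B$ (namely $|\delta|+3$), so after these blocks we are comparing the suffix $\nu2$ of $A$ against the suffix $\nu'$ of $B$, and here $|\nu2|=|\nu'|+2$ is even while... the point is that one word is strictly longer than the other. I would now run the alternation bookkeeping: each replacement consumes a matched pair of positions of equal total length (an $11$ against a $2$ contributes $2$ vs.\ $1$, a $2$ against an $11$ contributes $1$ vs.\ $2$), so after $k$ replacements the lengths of the consumed portions of $A$ and $B$ differ by $0$ if $k$ is even and by $\pm1$ if $k$ is odd, with the sign determined by which word currently holds the $11$. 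Since the portions of $A$ and $B$ after the last replacement coincide, the overall length difference $|A|-|B|=2$ must equal the accumulated length difference over all replacements plus the equal contribution from $\mu$ — and a careful sign analysis will show this forces the last replacement to be $11$-in-$A$ versus $2$-in-$B$, i.e.\ an odd number of replacements. Equivalently, and more cleanly, I would count occurrences of the letter $2$: by Fact~\ref{Basic}(c), $|A|_2=|B|_2=2p$, and by (b), $|A|_1=2q-2p-2$ while $|B|_1=2q-2p-4$, so $B$ has exactly two fewer $1$'s and the same number of $2$'s. Tracking how each replacement changes the letter counts (an $11\!\to\!2$ replacement removes two $1$'s and adds a $2$) against the fixed totals pins down the parity of the number of replacements.

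The main obstacle I expect is making the alternation-and-length bookkeeping rigorous without circularity: one must be sure that the ``agree from here on'' tail is genuinely identical in both words and that $\delta$, $\nu$, $\nu'$ do not secretly contain further replaceable entries that have been miscounted — this is exactly the subtlety the paper under discussion mishandled. To keep it clean I would phrase the argument purely in terms of the invariants of Fact~\ref{Basic}: the length identity $|A|-|B|=2$, the letter-count identities $|A|_2=|B|_2$ and $|A|_1-|B|_1=2$, and the replaceable-entry count (e) which gives $q+p-1$ replaceable entries in $A$ and $q+p-2$ in $B$. Combining the difference $1$ in the number of replaceable entries with the strict alternation (which matches replaceable entries of $A$ and $B$ in pairs until one list is exhausted) immediately yields that the number of replacements — the length of the matched alternating prefix of these two lists — has the opposite parity to what the even case assumed; since it starts with $11$ in $A$, it must therefore be odd.
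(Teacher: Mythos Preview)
Your proposal converges on the same argument the paper gives: observe via Fact~1 that $A$ carries one more replaceable $11$ than $B$ (equivalently $|A|_1-|B|_1=2$), and then note that an even number of alternating replacements would force $|A|_1=|B|_1$, a contradiction. Two caveats: you misapplied Fact~1 to $A$, which has denominator $q+1$, so the correct values are $|A|_1=2q-2p$, $|B|_1=2q-2p-2$, and $q+p$ (resp.\ $q+p-1$) replaceable entries in $A$ (resp.\ $B$) --- your \emph{differences} are right but the individual counts are off --- and the length-bookkeeping and ``tails agree'' discussion is superfluous (and, as you yourself worry, delicate) once the $1$-count argument is in hand; the paper's proof is three lines.
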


\begin{proof}
It follows from Fact \ref{Basic} that the word $A$ has one more replaceable entry $11$ than $B$. Suppose that $A$ and $B$ have an even number of replacements. Since the replacements occur alternately, one concludes $|A|_1 =|B|_1$, which is a contradiction.
\end{proof}

Next, we turn to the case where there are an odd number of replacements.
According to \cite[p.18]{RS}, one can write
\begin{equation} \label{OddSetting}
m_{p/(q+1)}=N[\mu,1,1,\nu,2]\ \ \mathrm{and}\ \ m_{p/q}=N[\mu',2,\nu],
\end{equation}
where $\mu, \mu', \nu \in \{1,2\}^*$, and both $\mu$ and $\mu'$ begin with $2$.
Observe that, not as in the even case, the partial quotients are compared from the right to the left.
Since there are an odd number of replacements between $\mu11\nu2$ and $\mu'2\nu$, two words $\mu$ and $\mu'$ have an even number of replacements.
To prove $N[\mu,1,1,\nu,2]-N[\mu',2,\nu]>0$, the authors derived
\begin{equation} \label{OddMed}
N[\mu,1,1,\nu,2]-N[\mu',2,\nu]=N[\mu,2,\nu,2]-N[\mu',2,\nu]+N[\mu^-]N[ ^-\nu,2].
\end{equation}
Since $\mu2\nu2$ and $\mu'2\nu$ have an even number of replacements, they appealed to the even case for the proof of
\begin{equation} \label{OddMain}
N[\mu,2,\nu,2]-N[\mu',2,\nu]>0.
\end{equation}

The proof of the even case depends upon the structure of the words.
But our words $\mu2\nu2$ and $\mu'2\nu$ do not fit into those of the even case.
Since $|\mu2\nu2|=2q-1$, it follows from Fact \ref{Basic} that $\mu2\nu2$ cannot be generated from any Markov snake graph. So Lemma 5.1, which is used in the proof of the even case, should be cautiously applied to the word $\mu2\nu2$.
A more special caution is also required when we apply Theorem 4.8 and thus Corollary 4.9, which are highly depends upon the whole structure of the words.
These concerns are well revealed by the next examples.

\begin{eg}
Considering the Markov snake graph $\mathcal{G}_{1/5}$ and $\mathcal{G}_{1/4}$, we find
$$m_{1/5}=N[2,1,1,1,1,1,1,2]\ \ \mathrm{and}\ \ m_{1/4}=N[2,1,1,1,1,2].$$
Note that $\nu$ in (\ref{OddSetting}) is the empty word $\nu=\varepsilon$, and
$\mu=\mu'=21111$.
Consequently, $N[ ^-\nu,2]$ in (\ref{OddMed}) should read $N[\ ]=1$.
Then the left-hand side of (\ref{OddMain}) is given by
$$N[\mu,2,\nu,2]-N[\mu',2,\nu]=N[2,1,1,1,1,2,2]-N[2,1,1,1,1,2].$$
The positivity of this value is trivial, but the present proof of the even case does not encompass this case.
\end{eg}
\medskip

An injury to the structure of the words during the induction steps of the even case is more subtle and serious.

\begin{eg}
We have
\begin{align*}
   m_{9/14} &=N[2,2,2,1,1,2,2,2,2,1,1,2,2,2,2,1,1,2,2,2,2,1,1,2,2,2],\\
   m_{9/13} &=N[2,2,2,2,2,1,1,2,2,2,2,1,1,2,2,2,2,1,1,2,2,2,2,2].
\end{align*}
There are an odd number of replacements between them.
To verify the positivity of $m_{9/14} -m_{9/13}$, let us follow the proof of Theorem 5.2 line by line.

In (\ref{OddSetting}), one notes $\nu=22$ and
\begin{align*}
   m_{9/14} &=N[\overbrace{2,2,2,1,1,2,2,2,2,1,1,2,2,2,2,1,1,2,2,2,2}^{\mu},1,1,\overbrace{2,2}^{\nu},2],\\
   m_{9/13} &=N[\underbrace{2,2,2,2,2,1,1,2,2,2,2,1,1,2,2,2,2,1,1,2,2}_{\mu'},2,\underbrace{2,2}_{\nu}].
\end{align*}
So the left-hand side of (\ref{OddMain}) becomes
\begin{align*}
   &N[\overbrace{2,2,2,1,1,2,2,2,2,1,1,2,2,2,2,1,1,2,2,2,2}^{\mu},2,\overbrace{2,2}^{\nu},2]\\
   -&N[\underbrace{2,2,2,2,2,1,1,2,2,2,2,1,1,2,2,2,2,1,1,2,2}_{\mu'},2,\underbrace{2,2}_{\nu}].
\end{align*}
which has an even number of replacements.
Next, we consult the proof of the even case.
The words $\mu$, $\delta$, $\nu$ and $\nu'$ in (\ref{EvenSetting}) are given as follows.
\begin{equation} \label{EvenInitial}
\begin{split}
&N[\overbrace{2,2,2}^{\mu},1,1,\overset{\delta\atop\parallel}{2},2,\overbrace{2,2,1,1,2,2,2,2,1,1,2,2,2,2,2,2,2}^{\nu},2]\\
-&N[\underbrace{2,2,2}_{\mu},2,\underset{\parallel\atop\delta}{2},1,1,\underbrace{2,2,2,2,1,1,2,2,2,2,1,1,2,2,2,2,2}_{\nu'}],
\end{split}
\end{equation}
Now the induction reduces the positivity of this value to the positivity of three values
$N[\nu,2]-N[\nu']$, $N[ ^-\nu,2]-N[ ^-\nu']$ and
$$N[\mu^-] N[\nu']-N[\mu] N[ ^-\nu'].$$
In particular, the positivity of the last value $N[\mu^-] N[\nu']-N[\mu] N[ ^-\nu']$ rests upon Corollary 4.9 and Theorem 4.8. More precisely, the inequality of continued fractions
$$[\widetilde{\mu}]<[\nu']$$
holds. Here, $\widetilde{\mu}$ denotes the reversal of $\mu$. See \cite[p.17]{RS} together with the first line of \cite[p.18]{RS}.

To prove $N[\nu,2]-N[\nu']>0$, we rewrite $\nu2$ and $\nu'$ in the form
$$\nu2 = \mu_1 11\delta_1 2 \nu_1 2,\quad \nu' = \mu_1 2\delta_1 11 \nu'_1,$$
that is,
\begin{align*}
&N[\overbrace{2,2,1,1,2,2,2,2,1,1,2,2,2,2,2,2,2}^{\nu},2]\\
-&N[\underbrace{2,2,2,2,1,1,2,2,2,2,1,1,2,2,2,2,2}_{\nu'}]\\
=\ \ \ &N[\overbrace{2,2}^{\mu_1},1,1,\overset{\delta_1 \atop\parallel}{2},2,\overbrace{2,2,1,1,2,2,2,2,2,2,2}^{\nu_1},2]\\
-&N[\underbrace{2,2}_{\mu_1},2,\underset{\parallel\atop\delta_1}{2},1,1,\underbrace{2,2,2,2,1,1,2,2,2,2,2}_{\nu'_1}].
\end{align*}
The induction again reduces the positivity of this value to the positivity of three values
$N[\nu_1,2]-N[\nu'_1]$, $N[ ^-\nu_1,2]-N[ ^-\nu'_1]$ and
$$N[\mu_1^-] N[\nu'_1]-N[\mu_1] N[ ^-\nu'_1].$$
Note here that $\mu_1$ has an even length, while $\mu$ in (\ref{EvenInitial}) has an odd length.
Accordingly, the inequality of continued fractions $[\widetilde{\mu}_1]<[\nu'_1]$ is no more true,
i.e., the value $N[\mu_1^-] N[\nu'_1]-N[\mu_1] N[ ^-\nu'_1]$ is negative.

As the induction steps proceed, the structure of the words gets critically damaged.
So the induction fails to prove the even case.
In order to apply Corollary 4.9 during the induction, the induction hypothesis should adhere to the fact that the common prefix $\mu_*$ has an odd length, but the present proof appeals to the case where $\mu_*$ has an even length.
\end{eg}

\section{Conclusion}

The inequality (\ref{OddMain}) seems to be probable, but a more elaborate proof is required for the present.
Therefore, the whole context of the proof of Theorem 5.2 is unclear and needs some modifications.
\bigskip

\begin{rmk}
As of May 2021, this comment paper has been rejected by Advances in Mathematics.
\end{rmk}




%
%
%
%
%
%



%
%
%
%
%
%

\end{document}